\documentclass[12pt]{amsart}
\usepackage{amsmath}
\usepackage[all]{xy}
\usepackage{cleveref}

\allowdisplaybreaks

\theoremstyle{plain}
\newtheorem{theorem}{Theorem}[section]
\newtheorem*{mthm*}{Main Theorem}

\newtheorem{lemma}[theorem]{Lemma}
\newtheorem{prop}[theorem]{Proposition}

\newtheorem{maintheorem}{Theorem}

\newenvironment{mthm}[1]
{\renewcommand{\themaintheorem}{#1}
\begin{maintheorem}}
{\end{maintheorem}}

\newtheorem{mainquestion}{Question}

\newenvironment{mqu}[1]
{\renewcommand{\themainquestion}{#1}
\begin{mainquestion}}
{\end{mainquestion}}

\newtheorem{mainconjecture}{Conjecture}

\newenvironment{mconj}[1]
{\renewcommand{\themainconjecture}{#1}
\begin{mainconjecture}}
{\end{mainconjecture}}

\theoremstyle{definition}

\newtheorem{rem}[theorem]{Remark}
\newtheorem{dfx}[theorem]{Definition}

\newtheorem*{convx*}{Convention}

\def\RR{\mathbb{R}}

\def\HH{\mathbb{H}}
\def\AdS{\mathbb{A}\mathbbm{d}\mathbb{S}}

\def\tr{\mathrm{tr}}

\def\AdS{\mathbb{A}\mathrm{d}\mathbb{S}}

\def\dis{\displaystyle}

\def\Hess{\mathrm{Hess}}

\def\vol{\mathrm{vol}}

\def\rar{\rightarrow}

\def\lra{\longrightarrow}

\def\arr#1#2{\stackrel{#1}{#2}}

\def\grad{\mathrm{grad}}

\def\Proj{\mathbf{P}}
\def\ProjR{\RR\Proj}
\def\PO{\mathrm{PO}}

\def\O{\mathrm{O}}
\def\Id{\mathrm{Id}}
\def\II{\mathrm{I\!I}}
\def\Tr{\mathrm{Tr}}
\def\Vol{\mathrm{Vol}}

\renewcommand{\d}{\mathrm{d}}

\title[Minimizing volume of AdS $3$-manifolds]{Minimizing the volume of globally hyperbolic anti-de Sitter $3$-manifolds}

\author{Gabriele Mondello}
\address{Gabriele Mondello: Sapienza Universit\`a di Roma, Department of Mathematics ``Guido Castelnuovo'' - piazzale Aldo Moro 5, Roma 00185 Italy}
\email{mondello@mat.uniroma1.it}

\author{Nicolas Tholozan}
\address{Nicolas Tholozan: CNRS, ENS-PSL, 45 rue d’Ulm, 75005 Paris, France}
\email{Nicolas.Tholozan@ens.fr}

\keywords{Anti-de Sitter 3-manifolds, constant mean curvature surfaces, foliations, rigidity}

\subjclass{53C50, 53C42, 53C24}

\begin{document}

\begin{abstract}
    In this paper we show that the volume of a maximal globally hyperbolic Cauchy-compact anti-de Sitter $3$-manifold $M$ is at least $\pi^2|\chi(M)|$, and that this minimum value is attained if and only if $M$ is Fuchsian.
\end{abstract}

\maketitle

\section{Introduction}\label{sec:intro}

An \emph{anti-de Sitter manifold} $M$ of dimension $d+1$ is a Lorentzian manifold of constant sectional curvature $-1$. Up to passing to a covering of degree $4$, one can assume that $M$ is \emph{space and time orientable}, meaning that there is a consistent orientation on every timelike curve and every spacelike hypersurface on $M$. The manifold $M$ is then locally modelled (in the sense of Thurston) on the homogeneous space 
\[\AdS^{d+1} := \{[\mathbf x] \in \ProjR^{d+1} \mid x_1^2 + \ldots + x_d^2 - x_{d+1}^2 - x_{d+2}^2 < 0\}\]
equipped with the transitive action of $\PO_0(d,2)$, the identity component of $\O(d,2)/\{\pm \Id_{d+2}\}$.

The manifold $M$ is called \emph{Globally Hyperbolic Cauchy-compact} (later abbreviated as GHC) if it contains a closed spacelike hypersurface $\mathcal H$ (called a \emph{Cauchy hypersurface}) intersecting every inextensible timelike curve at a single point. It follows that $M$ is diffeomorphic to $\mathcal H \times \RR$. Such a GHC anti-de Sitter manifold is \emph{maximal} (later abbreviated as MGHC) if it is not properly contained in another GHC anti-de Sitter manifold of the same dimension.

In dimension $3$, MGHC anti-de Sitter manifolds were described precisely by the foundational work of Mess \cite{Mess}. They have the form
\[\Gamma \backslash \Omega_\Gamma~,\]
where $\Gamma$ is the fundamental group of a Cauchy surface, conveniently embedded in $\PO_0(2,2)$, and $\Omega_\Gamma\subset \AdS^3$ is its \emph{domain of dependence} (see  \cite[Sections 5 and 7]{Mess}). When $\Gamma$ can be conjugated into $\PO_0(2,1)$, the manifold $\Gamma \backslash \Omega_\Gamma$ contains a \emph{totally geodesic} Cauchy surface and will be called \emph{Fuchsian}. 

The Lorentzian metric $g$ on $M$ induces a positive volume form $\vol_g$ and we define the \emph{volume of $M$} as
\[\Vol(M) := \int_{M} \vol_g~.\]
Despite $M$ being non-compact, this volume is always finite, and Question 4.2 of the influential survey \cite{OpenQuestionsAdS} asks whether it is minimized by Fuchsian manifolds. The purpose of the present paper is to answer this question positively.

\begin{mthm}{A} \label{thm: Main}
Let $M$ be a MGHC $\AdS$ $3$-manifold with Cauchy surface $\Sigma$ of negative Euler characteristic $\chi(\Sigma)$. Then 
\[\Vol(M) \geq \pi^2 \vert \chi(\Sigma)\vert\]
with equality if and only if $M$ is Fuchsian.
\end{mthm}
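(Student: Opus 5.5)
We plan to compute the volume using the foliation of $M$ by constant mean curvature (CMC) surfaces. By Barbot--B\'eguin--Zeghib, every MGHC $\AdS$ $3$-manifold $M$ is foliated by closed spacelike CMC Cauchy surfaces $\Sigma_H$, with mean curvature $H$ ranging over all of $\RR$. We will write the volume of $M$ as an integral over this foliation. Let $\nu$ be the future unit normal field and $\varphi$ the lapse function: the normal speed of the leaves with respect to the parameter $H$. Then
\[\Vol(M)=\int_{-\infty}^{+\infty}\int_{\Sigma_H}\varphi_H\,\d A_H\,\d H .\]
Differentiating the mean curvature along the normal variation $\varphi\nu$ gives the Jacobi-type equation
\[\Delta_H\varphi-\bigl(|B_H|^2-2\bigr)\varphi = c,\]
where $B_H$ is the shape operator and $c$ is a nonzero constant fixed by the normalization of $H$ (say $H=\tr B/2$). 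The sign conventions will be checked carefully, since the constant $-2$ comes from the curvature $-1$ in the Lorentzian Gauss formula.

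Write $B=\frac{H}{2}\cdot$(suitably scaled identity)$+B_0$, with $B_0$ traceless. The operator $L=\Delta-(|B|^2-2)$ is negative, because $|B|^2\geq 2H^2$ with the standard normalization, and this maximum principle makes $\varphi$ positive. Integrating the equation over $\Sigma_H$ kills the Laplacian term. We do not want a bound on $\int\varphi$ directly; instead we get an exact identity
\[\int_{\Sigma_H}\varphi\,(|B|^2-2)\,\d A=-c\,\mathrm{Area}(\Sigma_H).\]
Hence we need a lower bound on $\int\varphi$ in terms of the area, and an upper bound on $|B|^2$ where $\varphi$ is large. The plan is to use the Gauss equation $K=-1-\det B$ and Gauss--Bonnet, $\int K=2\pi\chi$, so that
\[\int_{\Sigma_H}\bigl(1+H^2-\tfrac12|B_0|^2\bigr)\,\d A=2\pi|\chi|.\]
This expresses the area through $H$ and $|B_0|^2$.

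The comparison model is the Fuchsian case. There $B_0=0$, each leaf is umbilic and equidistant from the totally geodesic surface, and $\varphi$ is constant on each leaf. Integrating over the timelike distance $t\in(-\pi/4,\pi/4)$, with area density $\cos^2(t)\cdot 2\pi|\chi|$, gives
\[2\pi|\chi|\int_{-\pi/4}^{\pi/4}\cos^2 t\,\d t=2\pi|\chi|\Bigl(\frac{\pi}{4}+\frac12\Bigr).\]
This does not match $\pi^2|\chi|$, so the Fuchsian normal geometry must be checked with the correct parametrization. In the correct model the leaves are at distance $t$ with $t$ ranging over $(-\pi/2,\pi/2)$ and area factor $\cos^2 t$, which gives $2\pi|\chi|\cdot\pi/2=\pi^2|\chi|$. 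The general inequality should then reduce to the following statement about each leaf: the quantity $\int_{\Sigma_H}\varphi\,\d A$ is at least its Fuchsian value $\Phi(H)$ at the same $H$.

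To prove this leafwise inequality, we will apply Jensen/Cauchy--Schwarz to the integrated equation, where the term $-\frac12|B_0|^2$ enters with a favourable sign. The idea is to test $L$ against the constant function and use the self-adjointness of $L$. The equation gives $\int\varphi(|B|^2-2)=|c|\,A$, so $\int\varphi$ should be compared with $A^2/\int(|B|^2-2)$ via the Cauchy--Schwarz inequality for the positive operator $-L$:
\[\Bigl(\int_{\Sigma_H} 1\Bigr)^2\leq\Bigl(\int_{\Sigma_H}\varphi\Bigr)\cdot\langle 1,-L1\rangle\Big/|c|,\]
and $\langle 1,-L1\rangle=\int(|B|^2-2)$. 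Combining this with Gauss--Bonnet controls $\int(|B|^2-2)$ and $A$ in terms of $2\pi|\chi|$, $H$ and $\int|B_0|^2$.

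The main obstacle is to make this combination monotone in $\int|B_0|^2$, so that the Fuchsian value $\int|B_0|^2=0$ is extremal. The first thing to try is to treat the resulting bound as a function of $x=\int|B_0|^2\,\d A$ and check that it is nondecreasing. If that fails, we would instead reparametrize by the leaves' level of $H$ differently, for instance by $\arctan$ of $H$. Equality forces $B_0\equiv0$ on every leaf, hence umbilic leaves, and then $M$ is Fuchsian.
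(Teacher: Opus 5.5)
\paragraph{A sign error breaks the argument as written.} Your lapse equation has the wrong sign in the potential. For the unit timelike normal in $\AdS^3$ one has $\mathrm{Ric}(N,N)=+2$. The correct equation, which is the paper's \eqref{eq:cmc} rewritten with $\tr B^2=4H^2+2+2K$, is
\[
\Delta\varphi-(|B|^2+2)\varphi=-2 .
\]
Your own Fuchsian model rules out $|B|^2-2$. There $\varphi=1/(1+H^2)$ and $|B|^2=2H^2$, so $(|B|^2+2)\varphi\equiv 2$, whereas $(|B|^2-2)\varphi$ is not constant in $H$.

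With your sign the potential can be negative; it equals $-2$ on the Fuchsian maximal leaf. So $|B|^2\geq 2H^2$ does not make $L$ negative, $\langle 1,-L1\rangle$ can be negative, and the Cauchy--Schwarz step is unavailable. You also stop before the decisive inequality (``the first thing to try\ldots''), so as it stands the proposal is a plan rather than a proof.

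\paragraph{With the correct sign, your plan closes.} Now $-L=-\Delta+|B|^2+2$ is positive definite and $-L\varphi=2$. Cauchy--Schwarz then gives $4A^2\le 2\int\varphi\,\d A\cdot\int(|B|^2+2)\,\d A$.

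Put $G=2\pi|\chi|$ and $X=\int|B_0|^2\,\d A$. Gauss--Bonnet gives $(1+H^2)A=G+X/2$, hence $\int(|B|^2+2)\,\d A=2(1+H^2)A+X=2(G+X)$. Therefore
\[
\int_{\Sigma_H}\varphi\,\d A\;\ge\;\frac{A^2}{G+X}=\frac{(G+X/2)^2}{(1+H^2)^2(G+X)}=\frac{G}{(1+H^2)^2}+\frac{X^2}{4(1+H^2)^2(G+X)} .
\]
Integrating over $H$, with $\int_\RR(1+H^2)^{-2}\,\d H=\pi/2$, gives $\Vol(M)\ge\pi^2|\chi|$.

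For the equality case: if equality holds, the function $H\mapsto\int_{\Sigma_H}\varphi\,\d A-G/(1+H^2)^2$ is continuous, nonnegative, and has zero integral, so it vanishes identically. This forces $X=0$ on every leaf. In particular the maximal surface is umbilic with zero mean curvature, hence totally geodesic, and $M$ is Fuchsian.

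\paragraph{Comparison with the paper.} The corrected route is genuinely different from the paper's. The paper writes $g_\tau=\sigma_\tau h_\tau$, where $h_\tau$ is the uniformizing metric of curvature $-(1+\tau^2)$. It then proves the pointwise bound $\sigma_\tau\ell_\tau\ge(1+\tau^2)^{-1}$ by a maximum principle at a minimum of $\sigma_\tau\ell_\tau$, combining the lapse equation, the conformal change of curvature formula and AM--GM. Integrating this against $\vol_{h_\tau}$ gives exactly your leafwise target $\int_{\Sigma_\tau}\ell_\tau\,\d A\ge 2\pi|\chi|/(1+\tau^2)^2$.

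Your argument reaches the same leafwise bound by an $L^2$ (energy) argument. It needs neither uniformization nor a maximum principle, only Gauss--Bonnet and positivity of a Schr\"odinger operator. It also produces an explicit deficit term in $X$, which makes rigidity immediate. The paper's argument yields more than the theorem needs: a pointwise comparison of $\vol_g$ with the volume form of the Fuchsian-like comparison metric $g'$.
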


The fine geometric properties of MGHC $\AdS$ $3$-manifolds have been thoroughly studied (see 
\cite{notes,BB,KS} among others). In particular Bonsante, Seppi and Tamburelli investigate their volume in \cite{BST} and describe precisely the coarse behaviour of the volume function on the moduli space of closed anti-de Sitter $3$-manifolds. However, Theorem \ref{thm: Main} does not seem to follow from their work. 

Our main result can be seen as a comparison between competing variational phenomena:
\begin{itemize}
\item Every MGHC $\AdS$ $3$-manifold contains a Cauchy surface of maximal area and a timelike curve of maximal length. Fuchsian manifolds \emph{minimize} the area of the maximal Cauchy surface
(see for instance \cite[Theorem 3.17]{KS}, or Lemma \ref{lm: AhlforsSchwarzPick} below)
but \emph{maximize} the length of a maximal timelike curve
(see, for instance, \cite[Corollary 6.25]{BB}).

\item Every MGHC $\AdS$ $3$-manifold $M$ decomposes as $M=\mathcal C_M\sqcup M_- \sqcup M_+$, where $\mathcal C_M$ is the \emph{convex core} of $M$ and $M_-$, $M_+$ are respectively the past and future of $\mathcal C_M$. If $M$ is Fuchsian, the convex core is reduced to the totally geodesic Cauchy surface and thus has volume $0$. Hence Fuchsian manifolds \emph{minimize} the volume of the convex core. On the other hand, they \emph{maximize} the volume of $M_+$ and $M_-$ \cite[Proposition G]{BST}.
\end{itemize}

To conclude this introduction, let us mention that the volume of higher dimensional MGHC manifolds has been investigated in \cite{MV}. Yet, the question whether Theorem \ref{thm: Main} holds in higher dimension is still wide open, leading to the following conjecture:

\begin{mconj}{B}
Let $M$ be a MGHC $\AdS$ manifold of dimension $d+1\geq 4$ whose Cauchy hypersurface $\mathcal H$ is homeomorphic to a closed hyperbolic manifold $\Gamma \backslash \HH^d$. Then
\[\Vol(M) \geq \frac{\pi}{2} \Vol(\Gamma \backslash \HH^d)~,\]
with equality if and only if $M$ is Fuchsian.
\end{mconj}

Finally, MGHC $\AdS$ manifolds whose Cauchy hypersurface is not homeomorphic to a locally symmetric space have been constructed in \cite{LeeMarquis} (in dimension $5\leq d+1 \leq 9$) and \cite{MST} (in all dimensions $\geq 5$). For such manifolds, it is not even clear what the minimizer of the volume should be.

\begin{mqu}{C}
Let $C$ be a closed manifold. What is the value of 
\[\inf\ \left\{\,\Vol(M) \ \big|\ \textrm{$M$ MGHC $\AdS$ manifold homeomorphic to $C\times \RR$} \,\right\}~?\]
\end{mqu}

\subsection*{Structure of the paper}

In \Cref{sec:MGHC} we recall some background on MGHC $\AdS$ $3$-manifolds, and in particular on their foliation by \emph{constant mean curvature surfaces} constructed by Barbot--B\'eguin--Zeghib \cite{BBZ}. In \Cref{sec:fuchsian} we prove a maximum principle for the volume form of a MGHC $\AdS$ $3$-manifold in ``CMC coordinates'' and 
we deduce Theorem \ref{thm: Main} from it.

\subsection{Acknowledgements}
We thank the anonymous referee for carefully reading the paper
and for useful remarks that improved the exposition.

The first-named author was partially supported by
INdAM GNSAGA research group,
by PRIN 2022 research project ``Moduli spaces and special varieties'',
and by Sapienza research projects ``Algebraic and differential aspects of varieties and moduli spaces'' (2022),
``Aspects of modular forms, moduli problems, applications to L-values'' (2023),
``Global, local and infinitesimal aspects of moduli spaces'' (2024).

\section{MGHC $\AdS$ $3$-manifolds and their CMC foliation}\label{sec:MGHC}

In this section we set up the notations and recall 
the foliation of MGHC $\AdS$ $3$-manifolds by constant mean curvature Cauchy surfaces
and the semi-linear differential equation satisfied by the associated lapse function $\ell_\tau$. We then describe the CMC foliation more explicitly in the Fuchsian case.\\

\smallskip

Let $(M,g)$ be a space and time oriented MGHC anti-de Sitter $3$-manifold and $\Sigma\subset M$ a (smooth) compact Cauchy surface of genus at least $2$.
We recall from \Cref{sec:intro} that, by \cite{geroch}, the manifold $M$ is diffeomorphic to $\Sigma\times\RR$.

Let $\nabla$ denote the Levi--Civita connection of $g$. Denote by $g_\Sigma$ the Riemannian metric induced by $g$ on $\Sigma$ and $N_\Sigma$ the future-pointing unit normal vector to $\Sigma$. 
If $f$ is a (smooth) function on $M$, we denote by $\grad(f)$ the {\it{gradient}} vector field
associated to $f$ with respect to the metric $g$.

\subsection{Constant mean curvature foliations} \label{ss: CMC foliation}

Recall that
the \emph{second fundamental form} $\II_\Sigma$ and the \emph{shape operator} $B_\Sigma$ of $\Sigma$ are respectively defined by
\[\II_\Sigma(X,Y) = g(\nabla_X Y, N_\Sigma)\]
and
\[B_\Sigma(X) =  -\nabla_X N_\Sigma\]
for any vector fields $X$ and $Y$ tangent to $\Sigma$.

\begin{rem}
    We warn the reader that, in the present case of $\Sigma$ a spacelike
    hypersurface in a Lorentzian manifold $M$,
    some people prefer to adopt the convention 
    $\II_\Sigma(X,Y)=-g(\nabla_X Y,N_\Sigma)$ and
    $B_\Sigma(X)=+\nabla_X N_\Sigma$. 
\end{rem}

The second fundamental form is a symmetric $2$-tensor on $T \Sigma$, while the shape operator is a field endomorphisms of $T\Sigma$ that is dual to $\II_\Sigma$ with respect to $g_\Sigma$, i.e.
\[g_\Sigma(B_\Sigma(X),Y) = \II_\Sigma(X,Y)~.\]
The principal curvatures of $\Sigma$ are the eigenvalues of $B_\Sigma$ and the \emph{mean curvature} of $\Sigma$ is their average, namely half the {\it{trace}} of $B_\Sigma$. 

\begin{dfx}
The surface $\Sigma$ has \emph{constant mean curvature $\tau \in \RR$} if 
\[\frac{1}{2}\Tr(B_\Sigma)\equiv \tau\]
at every point of $\Sigma$.
\end{dfx}

In the particular case $\tau=0$, a surface $\Sigma$ whose mean curvature vanishes identically maximizes its area among all Cauchy surfaces, and is called a \emph{maximal surface}.\\

The Gauss curvature $K_\Sigma$ of $\Sigma$, on the other hand, is related to the \emph{determinant} of $B_\Sigma$ by \emph{Gauss's equation}:
\begin{equation} \label{eq: Gauss}
K_\Sigma = -1 - \det(B_\Sigma)~.
\end{equation}
In particular, when $\Sigma$ has constant mean curvature $\tau$, one easily gets the following inequality:

\begin{prop}\label[proposition]{gaussian}
If $\Sigma$ is a Cauchy surface of constant mean curvature $\tau$ inside $M$, then
\[K_\Sigma \geq -(1+\tau^2)~.\]
Moreover the equality is everywhere attained if and only if $\Sigma$ is \emph{umbilical}, in which case $M$ is Fuchsian.
\end{prop}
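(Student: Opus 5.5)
The inequality is pointwise linear algebra on the shape operator combined with Gauss's equation \eqref{eq: Gauss}. At each point of $\Sigma$, $B_\Sigma$ is $g_\Sigma$-self-adjoint, so it has real eigenvalues $\lambda_1,\lambda_2$ with $\lambda_1+\lambda_2=2\tau$. Then
\[\det(B_\Sigma)=\lambda_1\lambda_2=\tau^2-\Big(\frac{\lambda_1-\lambda_2}{2}\Big)^2\le \tau^2,\]
and Gauss's equation gives
\[K_\Sigma=-1-\det(B_\Sigma)=-1-\tau^2+\Big(\frac{\lambda_1-\lambda_2}{2}\Big)^2\ge -(1+\tau^2).\]
Equality holds at a point exactly when $\lambda_1=\lambda_2$, i.e. when the point is umbilical. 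So equality holds everywhere if and only if $\Sigma$ is umbilical, with $B_\Sigma=\tau\,\Id$.

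It remains to show that an umbilical Cauchy surface forces $M$ to be Fuchsian. I would push $\Sigma$ along the normal geodesic flow. For $t$ small, set $\Sigma_t=\{\exp(tN_\Sigma(x))\}$. In constant curvature $-1$ Lorentzian space, the normal exponential map of a totally umbilic spacelike surface with $B=\tau\,\Id$ produces parallel surfaces that are again umbilic. The Jacobi fields along the timelike normal geodesics are $J(t)=(\cos t-\tau\sin t)E(t)$ with $E$ parallel, up to sign conventions. Hence $\Sigma_t$ has shape operator $\tau_t\Id$ with $\tau_t$ constant and obeying a Riccati equation of the form $\tau_t'=\pm(1+\tau_t^2)$.

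Choose $t_0=\pm\arctan\tau$ (sign fixed by the convention $B=-\nabla N$) so that $\tau_{t_0}=0$. This is a finite time in $(-\pi/2,\pi/2)$. The parallel surface $\Sigma_{t_0}$ is then totally geodesic, provided it is still an embedded Cauchy surface. To see this, lift to the universal cover $\tilde M\subset\AdS^3$. There the flow is given by the explicit model: an umbilic surface with $B=\tau\,\Id$ is a piece of an equidistant surface $\{x:\langle x,p\rangle=\text{const}\}$ from a totally geodesic plane $P=p^\perp$. The normal geodesics from the lift of $\Sigma$ meet $P$ orthogonally at time $t_0$, and the map to $P$ is a $\pi_1$-equivariant diffeomorphism onto its image.

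In fact I would argue the Fuchsian conclusion directly in $\AdS^3$. A connected umbilic spacelike surface with $B=\tau\,\Id$ lies in such an equidistant surface, by the standard rigidity for totally umbilic hypersurfaces in space forms, here $\PO_0(2,2)$-homogeneous. The lift $\tilde\Sigma$ is a complete, properly embedded spacelike surface invariant under $\Gamma$, so it is the full equidistant surface $\{\langle x,p\rangle=c\}$ (a copy of $\HH^2$). Therefore $\Gamma$ preserves it, hence preserves $p$ up to sign, so $\Gamma\subset\mathrm{Stab}(p)\cong\PO_0(2,1)$ after conjugation. That is precisely the Fuchsian condition. Then $P/\Gamma$ is a totally geodesic Cauchy surface in $\Omega_\Gamma$.

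The main obstacle is this global step: showing that the local umbilic piece fills the whole equidistant surface and that $\Gamma$ preserves it. I would handle it by completeness of the lift, since a Cauchy surface in an MGHC manifold lifts to a complete spacelike surface, together with the fact that the equidistant surface is connected, complete and spacelike. So the inclusion of $\tilde\Sigma$ into it is open and closed, and the invariance under $\Gamma$ follows immediately.
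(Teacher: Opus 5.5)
Your proof is correct. The inequality is the paper's computation, phrased with eigenvalues where the paper writes $K_\Sigma+1+\tau^2=\frac14(b_{11}-b_{22})^2+b_{12}^2$ in an orthonormal frame. Your first sketch of the rigidity step is the paper's one-line ``constant speed normal flow'' argument made explicit: flow $\Sigma$ along its normal geodesics for time $\arctan\tau$, using $J(t)=(\cos t-\tau\sin t)E(t)$.

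Your second, direct argument is a genuinely different route. Instead of producing a totally geodesic Cauchy surface inside $M$, you show that the holonomy fixes the timelike vector $p$. That is exactly the paper's definition of Fuchsian (conjugate into $\PO_0(2,1)$). It also avoids checking that the flowed surface stays in $M$ and is Cauchy, which the paper leaves implicit.

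The global step you flag as the main obstacle is in fact unnecessary. Lift the simply connected surface $\tilde\Sigma$ to the quadric in $\RR^{2,2}$ and let $D$ be the flat connection. Since $\langle D_XN,x\rangle=-\langle N,X\rangle=0$, you get $D_XN=\nabla_XN=-\tau X$. Hence $q:=N+\tau x$ is constant on $\tilde\Sigma$, with $\langle q,q\rangle=-(1+\tau^2)<0$. Each $\gamma\in\Gamma$, lifted to $\mathrm{O}(2,2)$ so that it preserves the lift of $\tilde\Sigma$, also preserves the future normal, so $\gamma q=q$ follows at once. This needs neither completeness, nor the open-closed argument, nor injectivity of the developing map.

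One small imprecision: the equidistant surface you get is only one connected component of the level set $\{\langle x,p\rangle=c\}$, not the whole level set.
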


Here, a surface $\Sigma$ is called \emph{umbilical} if $\II_\Sigma$ is a scalar multiple of $g_\Sigma$, or equivalently if $B_\Sigma$ is a scalar multiple of the identity.

\begin{proof}[Proof of Proposition \ref{gaussian}]
With respect to a local orthonormal frame $(e_1,e_2)$
on $\Sigma_\tau$, we can write $B_\Sigma=(b_{ij})$ for suitable local functions $b_{11}$, $b_{12}$ and $b_{22}$. Thus we have 
\[
K_\Sigma=-1-(b_{11}b_{22}-b_{12}^2)\, ,
\qquad \tau=\frac{1}{2}(b_{11}+b_{22})\, .
\]
It follows that
\[
K_\Sigma+1+\tau^2=\frac{1}{4}(b_{11}-b_{22})^2+b_{12}^2\geq 0\, .
\]
Hence $K_\Sigma\equiv -(1+\tau^2)$ everywhere if and only if $b_{11}\equiv b_{22}$ and $b_{12}\equiv 0$ everywhere, namely $\Sigma$ is umbilical. In that case, a constant speed normal flow can be used to deform the umbilical 
subsurface $\Sigma$ into a totally geodesic Cauchy surface, thus showing that $M$ is Fuchsian.    
\end{proof}

Barbot, B\'eguin and Zeghib proved that constant mean curvature surfaces always exist and form a foliation of $M$.
\begin{theorem}[Barbot--B\'eguin--Zeghib \cite{BBZ}]
Let $M$ be a MGHC $\AdS$ $3$-manifold. Then there exists a proper smooth submersion $m: M\to \RR$ such that, for all $\tau \in \RR$, $m^{-1}(\tau)$ is a Cauchy surface of constant mean curvature $\tau$. Moreover, $m^{-1}(\tau)$ is the unique Cauchy surface of constant mean curvature $\tau$.
\end{theorem}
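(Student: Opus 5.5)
The plan is the barrier method of Gerhardt and Bartnik, followed by a maximum-principle argument for uniqueness and for the foliation property. Everything is done in the universal cover $\Omega_\Gamma\subset\AdS^3$ with $\Gamma$-equivariance, or directly in $M$ using that $\Sigma$ is compact.

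\emph{Step 1: barriers.} For each $\tau\in\RR$ I want two disjoint Cauchy surfaces $S^-_\tau$ in the past of $S^+_\tau$, with mean curvature $\le\tau$ on $S^-_\tau$ and $\ge\tau$ on $S^+_\tau$ (in the paper's convention, with future unit normal). The natural candidates are level sets of the cosmological time functions. The future cosmological time $T_+$ measures distance from the past boundary of $\mathcal C_M$. The past cosmological time $T_-$ measures distance from the future boundary. By Mess's description, outside the convex core the level sets $\{T_+=t\}$ are $C^{1,1}$ surfaces whose principal curvatures are pinched between explicit functions of $t$. These come from equidistant surfaces to totally geodesic planes, or to geodesic lines along the bending locus, so the mean curvatures are $\tan$-type functions of $t$. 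Letting $t\to0$ and $t\to\pi/2$ on each side therefore sweeps out every value of $\tau$. A smoothing step makes the barriers smooth while keeping the mean curvature inequalities in the viscosity or support sense. For $\tau$ near $0$ I would combine the future level set of one time function with the past level set of the other, so that the barriers bracket the convex core.

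\emph{Step 2: existence and uniqueness.} Between the barriers, write a candidate surface as a graph over a fixed Cauchy surface in the normal time coordinate. The CMC equation is then a quasilinear elliptic equation. Barriers give $C^0$ bounds. The gradient estimate (uniform spacelikeness) is the \textbf{main obstacle}. I would try Bartnik's argument first: a time function with bounded lapse, combined with the fact that the barriers stay at bounded distance inside the domain of dependence, gives a uniform bound on the tilt $-g(N,\partial_t)$. Once that bound holds, Schauder theory and a Leray--Schauder continuity or degree argument produce a smooth compact CMC Cauchy surface $\Sigma_\tau$. Uniqueness follows from the maximum principle. Suppose $\Sigma$ and $\Sigma'$ both have mean curvature $\tau$, and suppose $\Sigma'$ is not in the future of $\Sigma$. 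Flow $\Sigma$ into the past until it is tangent to $\Sigma'$ from the past at a first contact point. The comparison principle for mean curvature then forces the two surfaces to coincide.

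\emph{Step 3: foliation and the function $m$.} The same comparison shows that $\tau<\tau'$ implies $\Sigma_{\tau'}$ lies strictly in the future of $\Sigma_\tau$. The argument is the same first-contact argument: at a point of tangency where $\Sigma_{\tau'}$ lies to the past, the inequality $\tau'\le\tau$ would follow. Differentiating in $\tau$, via the implicit function theorem applied to the invertible Jacobi operator, gives a normal variation $\ell_\tau N$ satisfying
\[
\Delta\ell_\tau-(\Tr(B^2)-2)\ell_\tau=\pm 2.
\]
The sign and constants here are up to convention. Positivity of $\Tr(B^2)-2\ge 2\tau^2+\ldots$ has to be checked; if it fails, I would argue via the monotonicity above instead. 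The maximum principle then yields $\ell_\tau>0$. So the surfaces $\Sigma_\tau$ depend smoothly on $\tau$, their union is open, and $m(x)=\tau$ for $x\in\Sigma_\tau$ defines a smooth submersion.

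Surjectivity onto $M$ comes from the barriers. $\Sigma_\tau$ lies in the region $\{T_+\le t(\tau)\}$ with $t(\tau)\to0$ as $\tau\to-\infty$, and symmetrically as $\tau\to+\infty$, so every point of $M$ is crossed by some $\Sigma_\tau$. The same control gives properness: $m^{-1}([a,b])$ is sandwiched between two compact Cauchy surfaces, hence is compact.
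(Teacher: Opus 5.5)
The paper does not prove this statement; it quotes it from Barbot--B\'eguin--Zeghib. Your outline (cosmological-time level sets as barriers, Gerhardt's existence theorem, maximum principle for uniqueness and ordering, positivity of the lapse) is essentially their route. The existence part is acceptable at this level of detail. In fact the gradient estimate you single out is not where the difficulty lies: for compact Cauchy surfaces, Gerhardt's theorem produces a CMC surface between smooth barriers without any curvature hypothesis.

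The genuine gap is that you never use the one piece of anti-de Sitter geometry on which everything after existence depends. That fact is $\mathrm{Ric}(N,N)=2>0$ for every unit timelike $N$ (the timelike convergence condition). Without such an input uniqueness is false in general: in de Sitter space there are many distinct totally geodesic Cauchy spheres, all maximal. And at the one place where this term appears in your proposal, it has the wrong sign.

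\textbf{Step 2 (uniqueness).} The past-flowed copy of $\Sigma$ is not CMC, so the comparison principle cannot force coincidence. At the contact point it only says that the surface lying to the past has mean curvature $\geq\tau$ (in the paper's convention). The contradiction comes from the identity $\partial_s\Tr B=\Tr(B^2)+2>0$ along the future unit normal flow. This is the computation in the proof of Proposition~\ref{prop:lapse} with $\ell\equiv1$, and it makes the flowed copy have mean curvature $<\tau$. Your ordering $\tau<\tau'\Rightarrow\Sigma_{\tau'}\subset I^+(\Sigma_\tau)$ in Step 3 rests on the same identity. Also, normal geodesics in $\AdS^3$ develop focal points, so you cannot flow all of $\Sigma$. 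The argument should instead be run locally along a geodesic realizing the maximal time separation between the two surfaces, e.g.\ via the second variation of its length.

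\textbf{Step 3 (lapse).} The correct lapse equation is $\Delta\ell_\tau-(\Tr(B_\tau^2)+2)\ell_\tau=-2$, which is \eqref{eq:cmc} rewritten using $\Tr(B_\tau^2)=4\tau^2+2+2K_\tau$. Since $\Tr(B_\tau^2)+2\geq2\tau^2+2>0$, two things follow:
\begin{itemize}
\item the Jacobi operator $-\Delta+\Tr(B_\tau^2)+2$ is positive, hence invertible, so the implicit function theorem applies;
\item at a minimum point $\bar p$ of $\ell_\tau$ one gets $\ell_\tau(\bar p)\geq 2/(\Tr(B_\tau^2)(\bar p)+2)>0$.
\end{itemize}
With your coefficient $\Tr(B_\tau^2)-2$, which is negative near a totally geodesic surface, neither conclusion follows. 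The fallback via monotonicity does not rescue it. Ordering of the leaves gives at best $\ell_\tau\geq0$, and only after differentiability in $\tau$ is known, which is exactly what needed invertibility.

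Once this curvature input is restored, your outline becomes the standard argument.
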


From now on, we will denote by $\Sigma_\tau = m^{-1}(\tau)$ the unique Cauchy surface of CMC curvature $\tau$, by $g_\tau$ the restriction of $g$ to $\Sigma_\tau$, by $\vol_\tau$ the induced area form on $\Sigma_\tau$ and by $N_\tau$ its future-pointing unit normal. We also denote respectively by $\II_\tau$, $B_\tau$, and $K_\tau$ the second fundamental form, shape operator and Gauss curvature of $\Sigma_\tau$. Let $\nu$ be the timelike, normal vector field on $M$ given by
\[\nu := \frac{1}{g(\grad(m), \grad(m))} \grad(m)~,\]
which is constructed so that $\d m(\nu) = 1$.

Note that the flow $\phi^\nu$ generated by the vector field $\nu$
determines a global trivialization
\[
\Phi:
\xymatrix@R=0in{
\Sigma_0\times\RR \ar[r]^{\quad\cong} & M\\
(x,\tau) \ar@{|->}[r] & \phi^\nu_\tau(x)
}
\]
which maps $\Sigma_0 \times \{\tau \}$ diffeomorphically onto $\Sigma_\tau$.\\

The {\it{lapse function}} $\ell:M\rar\RR_{>0}$, defined as
\[\ell: = \sqrt{- g(\nu,\nu)} = \frac{1}{\sqrt{-g(\grad(m),\grad(m))}}\]
will play an important role in \Cref{sec:fuchsian}.

Denote by $\nu_\tau$ and $\ell_\tau$ the restrictions of $\nu$ and $\ell$ to $\Sigma_\tau$, so that
\[\nu_\tau = \ell_\tau N_\tau~.\]
The following property of $\ell_\tau$ is well-known
(see, for instance, \cite[Section 3]{AMT}). Here we include a complete computation for the reader's convenience.

\begin{prop}\label[proposition]{prop:lapse}
  The function $\ell_\tau:\Sigma_\tau\rar\RR_{>0}$ satisfies the equation  
  \begin{equation}\label{eq:cmc}
   \frac{1}{2} \Delta_{g_\tau}\ell_\tau-(2 \tau^2+K_\tau+2)\ell_\tau+1=0
\end{equation}  
where $\Delta_{g_\tau}$ denotes the standard
Laplacian $\Delta_{g_\tau}:=\Tr_{g_\tau} \mathrm{Hess}$ on $\Sigma_\tau$
with respect to the background metric $g_\tau$.
\end{prop}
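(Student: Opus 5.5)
The plan is to compute how the mean curvature of the leaves changes under the normal variation $\nu = \ell N$, and to use that $m$ is constant on leaves with $\d m(\nu)=1$. This forces the first variation of $H=\frac12\Tr B$ along $\nu$ to equal $1$. The lapse equation is then the standard first-variation formula for mean curvature in a Lorentzian space form, combined with the Gauss equation \eqref{eq: Gauss}.

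\textbf{Step 1: variation of the shape operator.} Along the flow of $\nu$, the shape operator $B_\tau$ (in the paper's sign convention $B(X)=-\nabla_X N$) varies by
\[
\frac{d}{d\tau}B_\tau = \Hess_{g_\tau}\ell_\tau \;(\text{as an endomorphism}) + \ell_\tau\big(\pm B_\tau^2 + R(\cdot,N)N\big),
\]
with signs to be fixed. To derive this, I will take $X$ tangent and $\nu$-invariant, so $[\nu,X]=0$, and differentiate $\nabla_X N$ in the direction $\nu$. I will use $\nabla_\nu X=\nabla_X\nu=\nabla_X(\ell N)=(X\ell)N-\ell B(X)$. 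I will also use the fact that $\nabla_N N$ is tangent and equals $\pm\grad_{g_\tau}\log\ell$; this holds because $N=-\ell\,\grad m$ up to sign, and it is what produces the Hessian term.

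For the curvature term, constant curvature $-1$ gives $R(X,N)N=-(g(N,N)X-g(X,N)N)=X$, up to the sign convention. So $\Tr$ of the curvature contribution is $\pm2\ell$.

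\textbf{Step 2: take the trace.} Taking the trace gives
\[
2\frac{dH}{d\tau}=\Delta_{g_\tau}\ell_\tau+\ell_\tau\big(\Tr B_\tau^2+2\big)
\]
after fixing signs. Since $H=\tau$ on $\Sigma_\tau$, the left side equals $2$. Then I use
\[
\Tr B^2=(\Tr B)^2-2\det B=4\tau^2+2(K_\tau+1).
\]
This gives $\Tr B^2+2=4\tau^2+2K_\tau+4$, and dividing by $2$ yields
\[
1=-\tfrac12\Delta\ell+(2\tau^2+K_\tau+2)\ell,
\]
which is \eqref{eq:cmc} provided the Laplacian term carries the sign $-\frac12\Delta\ell$ in this intermediate identity.

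\textbf{Step 3: the obstacle is the signs.} The main difficulty is consistent sign bookkeeping in Lorentzian signature: $g(N,N)=-1$, the convention for $B$, and the sign of $\nabla_N N$. I will fix these signs with a sanity check in the Fuchsian model. There, $\Sigma_\tau$ is an equidistant surface at timelike distance $t$ from the totally geodesic one, with $B=\tan t\cdot\Id$ (up to sign), so $\tau=\tan t$ and $\ell=dt/d\tau=\cos^2 t=1/(1+\tau^2)$ is constant. The Gauss equation gives $K=-1-\tau^2$. Then $(2\tau^2+K+2)\ell=(1+\tau^2)\ell=1$, so \eqref{eq:cmc} holds. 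This check confirms the zeroth-order coefficients.

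The sign of the Laplacian term is fixed by the elliptic maximum-principle consistency: at a maximum of $\ell$ the first-order deformation must still produce mean curvature $\tau+d\tau$. Alternatively, one can carefully redo Step 1 with $\nabla_N N=-\grad\log\ell$. From $N=-\ell\,\grad m$ one gets
\[
\nabla_N N=\ell^{-1}\grad_{g_\tau}\ell\cdot(\text{sign})
\]
by the standard computation for a unit normal of a level-set foliation.
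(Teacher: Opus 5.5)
Your strategy is the paper's: differentiate the shape operator along $\nu=\ell N$, take the trace, use $\nu\cdot\tr B=2\,\d m(\nu)=2$, and finish with the Gauss equation. The paper organizes the same computation as evaluating $\nu\cdot\sum_i g(BX_i,X_i)$ in two ways.

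The gap is that the one sign with real content is never determined. Your Step 1 and Step 2 displays carry $+\Hess\ell$ and $+\Delta\ell$. Taken at face value they give $\frac12\Delta_{g_\tau}\ell_\tau+(2\tau^2+K_\tau+2)\ell_\tau-1=0$, which is \eqref{eq:cmc} with the Laplacian reversed. You then flip the sign by hand (``provided the Laplacian term carries the sign $-\frac12\Delta\ell$'').

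The Fuchsian check cannot settle this, because $\ell_\tau$ is constant there. It does legitimately fix the zeroth-order signs, since $\pm2\tau^2\pm2=2(1+\tau^2)$ for all $\tau$ forces both to be $+$. The appeal to ``maximum-principle consistency'' is not an argument. Your alternative guess $\nabla_N N=-\grad\log\ell$ has the wrong sign. This sign matters: the relative sign of $\Delta\ell$ and the zeroth-order term is exactly what the later maximum-principle arguments (the bound $\ell_\tau\le 1/(1+\tau^2)$ and Proposition \ref{inequality}) depend on.

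The missing computation is short. For $X$ tangent to the leaves with $[X,\nu]=0$, you have $g(\nabla_\nu N,N)=0$ and
\[
g(\nabla_\nu N,X)=-g(N,\nabla_\nu X)=-g(N,\nabla_X\nu)=-g\bigl(N,(X\cdot\ell)N-\ell BX\bigr)=X\cdot\ell,
\]
using $g(N,N)=-1$. So $\nabla_\nu N=\grad_{g_\tau}\ell_\tau$, that is, $\nabla_N N=+\grad_{g_\tau}\log\ell$.

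Then $\nabla_\nu(BX)=-R(\nu,X)N-\nabla_X\nabla_\nu N$ has tangential part $\ell X-\Hess(\ell)X$, since $R(N,X)N=-X$. The tangential part of $\nabla_{BX}\nu$ is $-\ell B^2X$. Subtracting gives $(\mathcal{L}_\nu B)X=-\Hess(\ell)X+\ell(B^2+\Id)X$ on the leaves.

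Tracing, and using $\tr(\mathcal{L}_\nu B)=\nu\cdot\tr B=2$, yields $2=-\Delta_{g_\tau}\ell_\tau+\ell_\tau(\tr B_\tau^2+2)$. Combined with $\tr B_\tau^2=4\tau^2+2+2K_\tau$, this is exactly \eqref{eq:cmc}. With this step supplied, your argument is complete.
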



\begin{proof} 
Fix $\tau$. Let $(X_1,X_2)$ be a local orthonormal frame tangent to $\Sigma_\tau$
and extend it over $M$ along the flow generated by $\nu$,
so that $0=[X_i,\nu]=\nabla_{X_i}\nu-\nabla_\nu X_i$. Let $(g_{ij})$ and $(b_i^j)$ be defined by $g_{ij}=g(X_i,X_j)$ and $BX_i=\sum_j b_i^j X_j$, where $B$ at
a point $p$ of $M$ is the shape operator (with respect to $N$) of the space-like
surface $\Sigma_{m(p)}$ at $p$.\\

{\bf{Aim.}}
We will compute the derivative of $\sum_i g(BX_i,X_i)$ along $\nu$ on $\Sigma_\tau$ in two ways.\\

{\bf{Preliminary calculations.}}
Since $\nu=\ell N$, we have $\nabla_{X_i}\nu=\nabla_{X_i}(\ell N)=(X_i\cdot \ell)N+\ell\nabla_{X_i}N$ and so 
\[
\nabla_\nu X_i=\nabla_{X_i} \nu=
(X_i \cdot \ell) N-\ell BX_i\,.
\]
Then we remark that
$2g(\nabla_{\nu} N,N)=\nu\cdot g(N,N)=0$ and that
\begin{eqnarray*}
g(\nabla_\nu N,X_i)&=&\nu \cdot g(N,X_i)-g(N,\nabla_\nu X_i)=g(N,\nabla_{X_i}\nu)\\
&=&-g(N,(X_i\cdot \ell) N-\ell BX_i)=X_i\cdot \ell~,
\end{eqnarray*}
which implies that 
\[
\nabla_\nu N=\grad(\ell)\, .
\]
\smallskip

{\bf{First way.}}
Note that $g(BX_i,X_i)=g(\sum_j b_i^j X_j,X_i)=\sum_j g_{ij} b_i^j$ and that
$\nu\cdot (g_{ij}b_i^j)=(\nu\cdot g_{ij}) b_i^j+g_{ij}(\nu\cdot b_i^j)$.
Now,
\begin{eqnarray*}
    \nu\cdot g_{ij}&=&\nu\cdot g(X_i,X_j)
    =g(\nabla_\nu X_i,X_j)+g(X_i,\nabla_\nu X_j)\\
    &=& g((X_i\cdot \ell)N-\ell BX_i,X_j)+g(X_i,(X_j\cdot\ell)N-\ell BX_j)\\
&=&-2\ell g(BX_i,X_j)=-2\ell \sum_k b_i^k g_{jk}~.
\end{eqnarray*}
Thus
\[\nu\cdot g(BX_i,X_i)= -2\ell_\tau \sum_{j,k} b_i^k g_{jk} b^j_i+g_{ij}(\nu\cdot b_i^j)~.\]

Since $g_{ij}\big|_{\Sigma_\tau}=\delta_{ij}$ and
$\sum_i\nu\cdot b_i^i=\nu\cdot \tr(B)=2\nu \cdot m=2$, it
follows that
\begin{equation}\tag{$W_1$}\label{w1}
\nu\cdot \sum_i g(BX_i,X_i)\Big|_{\Sigma_\tau}=-2\ell_\tau \, \tr(B_\tau^2)+2
\end{equation}
holds.\\

{\bf{Second way.}}
Consider
\[
\nu\cdot g(BX_i,X_i)=g(\nabla_\nu (BX_i),X_i)+g(BX_i,\nabla_\nu X_i).
\]
As for the first summand, 
\[\nabla_\nu (BX_i)=-\nabla_\nu \nabla_{X_i}N=
-R(\nu,X_i)N-\nabla_{X_i}\nabla_\nu N~.\]
Since $R(N,X_i)N=-X_i$, we obtain that 
\[\nabla_\nu(BX_i)=
\ell X_i-\Hess(\ell)X_i~.\]
The first summand can now be written as
\[
\sum_i g(\nabla_\nu(BX_i),X_i)\Big|_{\Sigma_\tau}=2\ell_\tau-\Delta_{g_\tau}\ell_\tau
\]
As for the second summand, we have
$\sum_i g(BX_i,\nabla_\nu X_i)=\sum_i g(BX_i,(X_i\cdot\ell)N-\ell BX_i)=-\ell \,\tr(B^2)$.
We thus obtain 
\begin{equation}\tag{$W_2$}\label{w2}
\nu\cdot \sum_i g(BX_i,X_i)\Big|_{\Sigma_\tau}=2\ell_\tau-\Delta_{g_\tau}\ell_\tau-\ell_\tau\,\tr(B_\tau^2)
\end{equation}
along $\Sigma_\tau$.\\

{\bf{Equating \eqref{w1} and \eqref{w2}.}}
Equating the two expressions for $\nu\cdot\sum_i g(BX_i,X_i)\big|_{\Sigma_\tau}$, we obtain
\[
2\ell_\tau-\Delta_{g_\tau}\ell_\tau-\ell_\tau\,\tr(B_\tau^2)=-2\ell_\tau\,
\tr(B_\tau^2)+2
\]
which can be rewritten as
$\frac{1}{2}\Delta_{g_\tau}\ell_\tau-\frac{1}{2}\ell_\tau\,\tr(B_\tau^2)-\ell+1=0$.
Recalling that \[\tr(B_\tau^2)=\tr(B_\tau)^2-2\det(B_\tau)=(2\tau)^2-2(-1-K_\tau)=4\tau^2+2+2K_\tau~,\] we obtain the wished equation
\[
\frac{1}{2}\Delta_{g_\tau}\ell_\tau -(2\tau^2+K_\tau+2)\ell_\tau+1=0
\]
for $\ell_\tau$.
\end{proof}


%
%
%
%
%
%
%
%

\subsection{The Fuchsian case}

Consider a Fuchsian anti-de Sitter $3$-manifold $(M,g)$ and let $(\Sigma_0,g_0)$ be the totally
geodesic space-like Cauchy surface in $M$.

Using a normal flow,
it is easy to see that the universal cover $\widetilde{M}$ of $M$ is isometric to
$\HH^2\times \left(-\frac{\pi}{2},\frac{\pi}{2}\right)_{\tilde{r}}$ endowed with the Lorentzian metric $\cos^2(\tilde{r})g_{\HH^2}-d\tilde{r}^2$,
that $\pi_1(M)$ acts trivially on 
the second factor of $\HH^2\times\left(-\frac{\pi}{2},\frac{\pi}{2}\right)$, and that
the preimage of $\Sigma_0$ inside $\widetilde{M}$
is exactly $\HH^2\times\{0\}$. We then obtain a diffeomorphism
\[
(\Pi,r):M\arr{\sim}{\lra} \Sigma_0\times \left(-\frac{\pi}{2},\frac{\pi}{2}\right)\, ,
\]
where $\Pi$ is the closest-point projection onto $\Sigma_0$ and $r$ is 
the signed-distance from $\Sigma_0$ induced by $\tilde{r}$.

Since the space-like surface $\Sigma(r)$ in $M$ at distance $r$ from $\Sigma_0$
has intrinsic metric $g_r=\cos^2(r)g_0$ and unit normal vector $N=\partial_r$,
the following is just a computation.

\begin{lemma}
    Let $M$ be an MGHC Fuchsian $3$-manifold with totally geodesic Cauchy surface $\Sigma_0$, and let $\Sigma(r)$ be the space-like surface in $M$ that sits 
    at constant (signed) distance $r$ from $\Sigma_0$.   
Then for every $r\in\left(-\frac{\pi}{2},\frac{\pi}{2}\right)$ the surface $\Sigma(r)$
is umbilical and has constant mean curvature $\tau=\tan(r)$.
    Moreover $\Sigma(r)=\Sigma_\tau$ has
    curvature $K_\tau=-1/\cos^2(r)=-(1+\tau^2)$, and the lapse function satisfies $\ell_\tau=\cos^2(r)=1/(1+\tau^2)$.
\end{lemma}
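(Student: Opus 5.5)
The plan is to compute everything directly in the coordinates $(\Pi,r)$, using the warped product metric $g=\cos^2(r)g_0-\d r^2$ on $\Sigma_0\times\left(-\frac{\pi}{2},\frac{\pi}{2}\right)$. Then uniqueness of CMC Cauchy surfaces from the theorem of Barbot--B\'eguin--Zeghib identifies $\Sigma(r)$ with $\Sigma_\tau$. Finally, Proposition \ref{prop:lapse} (or the definition of $\ell$ directly) gives the lapse.

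First I compute the shape operator of $\Sigma(r)$ with respect to the future unit normal $N=\partial_r$. For $X,Y$ tangent to $\Sigma_0$ and lifted as coordinate fields independent of $r$, Koszul's formula in the warped product gives
\[
g(\nabla_X Y,\partial_r)=-\tfrac12\,\partial_r\big(g(X,Y)\big)=\sin(r)\cos(r)\,g_0(X,Y).
\]
Since $g_r=\cos^2(r)g_0$, this yields $\II_r=\tan(r)\,g_r$, so $B_r=\tan(r)\,\Id$. Thus $\Sigma(r)$ is umbilical with constant mean curvature $\tau=\tan(r)$. As a sign check, this is consistent with $B=-\nabla N$: the area of $\Sigma(r)$ decreases for $r>0$, and that should correspond to positive $\tau$ in the paper's convention. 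The map $r\mapsto\tan(r)$ is a bijection $\left(-\frac{\pi}{2},\frac{\pi}{2}\right)\to\RR$, and each $\Sigma(r)$ is a compact spacelike graph over $\Sigma_0$, hence a Cauchy surface. Uniqueness in the theorem of Barbot--B\'eguin--Zeghib then gives $\Sigma(r)=\Sigma_{\tan(r)}$.

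For the curvature, I use that $g_r=\cos^2(r)g_0$ is a constant rescaling of a hyperbolic metric, so $K_\tau=-1/\cos^2(r)=-(1+\tan^2 r)=-(1+\tau^2)$. This also agrees with the Gauss equation \eqref{eq: Gauss}, since $\det B=\tau^2$.

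For the lapse, the CMC function is $m=\tan(r)$. Hence $\grad(m)=-\frac{1}{\cos^2(r)}\partial_r$, because $g^{rr}=-1$, and so $g(\grad m,\grad m)=-1/\cos^4(r)$. This gives $\ell=\cos^2(r)=1/(1+\tau^2)$. As a cross-check against \eqref{eq:cmc}: $\ell_\tau$ is constant, so the equation reduces to $(2\tau^2+2-1-\tau^2)\ell_\tau=1$, i.e. $\ell_\tau=1/(1+\tau^2)$, which is consistent. The only real obstacle is keeping the sign conventions straight, namely the sign of $\tau$ against $r$ under the convention $B=-\nabla N$. I would settle this with the Koszul computation above.
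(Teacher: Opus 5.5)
Your proof is correct and follows essentially the same route as the paper: a direct computation in the warped-product coordinates $\cos^2(r)g_0-\d r^2$. Your Koszul identity $g(\nabla_X Y,\partial_r)=-\frac12\partial_r g(X,Y)$ is the same fact the paper obtains by differentiating $g_r(X,X)$ in $r$ in two ways, and your gradient computation of $\ell$ is equivalent to the paper's $1/\ell_\tau=N\cdot\tau=\partial_r\tan(r)$; you also make explicit both the appeal to Barbot--B\'eguin--Zeghib uniqueness and the fact that each $\Sigma(r)$ is a Cauchy surface, which together identify $\Sigma(r)$ with $\Sigma_{\tan(r)}$ and which the paper leaves implicit.
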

\begin{proof}
Let $X$ be a vector field tangent to $\Sigma_0$ and extend it over $M$ so that 
$[\partial_r,X]=0$. We will compute $\partial_r g_r(X,X)$ in two ways.

First we have
\[
\partial_r g_r(X,X) =\partial_r \cos^2(r) \cdot g_0(X,X)=-2\cos(r)\sin(r) g_0(X,X)\, .
\]
On the other hand
\[
\partial_r g_r(X,X) =2 g_r(\nabla_{\partial_r}X,X)=2\cos^2(r)g_0(\nabla_X\partial_r,X)=
-2\cos^2(r)g_0(B_{\Sigma(r)}X,X)\, .
\]
Since $X$ is arbitrary, we conclude that $B_{\Sigma(r)}=\tan(r)\mathrm{Id}$.
Hence $\Sigma(r)$ is umbilical, with constant mean curvature $\tau=\tan(r)$ and constant Gauss curvature $-(1+\tau^2) = -1/\cos^2(r)$.

Finally, since, $\nu\cdot \tau = 1$, we have $1/\ell_\tau= N\cdot \tau =\partial_r\tan(r)=1/\cos^2(r)$. Hence $\ell_\tau=\cos^2(r)=1/(1+\tau^2)$.
\end{proof}

\section{Comparison with the Fuchsian case}\label{sec:fuchsian}

In this section we prove our main result, pursuing the following strategy.
We consider an MGHC $\AdS$ $3$-manifold $(M,g)$ and we define a different metric
$g'$ on $M$ that ``mimicks'' the metric of an MGHC Fuchsian manifold.
In particular $g'$ has constant Gaussian curvature equal to 
$K'_\tau:= -(1+\tau^2)$
along the Cauchy surface $\Sigma_\tau$
of constant mean curvature $\tau$
and has volume equal to the volume $\pi^2|\chi(\Sigma_0)|$ of a Fuchsian manifold. 
Then we use a maximum principle to show 
(Proposition \ref{inequality}) that the volume forms of $g$ and $g'$ satisfy $\vol_g\geq\vol_{g'}$, thus leading to the wished conclusion.\\

\smallskip

From now on, we let $M$ be a space and time-oriented MGHC $\AdS$ $3$-manifold whose Cauchy hypersurface has genus at least $2$. We let $\Sigma_\tau$ be the unique Cauchy surface of constant mean curvature $\tau$ and let $g_\tau$, $K_\tau$, $\II_\tau$, $B_\tau$, $\nu_\tau$, $\ell_\tau$, be as in \Cref{sec:MGHC}.\\

By the uniformisation theorem (\cite{koebe1,koebe2,poincare})
there exists a unique metric $h_\tau$ on $\Sigma_\tau$
that is conformal to $g_\tau$ and has constant Gaussian curvature $K'_\tau$.
Let $\sigma_\tau : \Sigma_\tau \to \RR_{>0}$ be the conformal factor such that
\[g_\tau=\sigma_\tau h_\tau~.\]
The equation for the variation of the curvature under conformal change gives:
\begin{equation}\label{eqK}
\frac{1}{2}
   \Delta_{g_\tau}\log(\sigma_\tau)=-K_\tau+\frac{1}{\sigma_\tau}K'_\tau
\end{equation}
Clearly, in the Fuchsian case $h_\tau\equiv g_\tau$ and $\sigma_\tau\equiv 1$. 

\begin{lemma} \label[lemma]{lm: AhlforsSchwarzPick}
The function $\sigma_\tau$ satisfies
    \begin{equation} \label{eq: AhlforsSchwarzPick}
\sigma_\tau \geq 1~.
\end{equation}
Moreover, if the equality is everywhere attained, then $M$ is Fuchsian.

\end{lemma}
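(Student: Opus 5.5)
This is an Ahlfors--Schwarz--Pick type argument: apply the maximum principle to the conformal-change equation \eqref{eqK} at the minimum of $\sigma_\tau$, using the curvature bound of Proposition \ref{gaussian}.

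\textbf{Inequality.} Since $\Sigma_\tau$ is compact, $\log\sigma_\tau$ attains its minimum at some point $p$. There the Hessian is nonnegative, so $\Delta_{g_\tau}\log(\sigma_\tau)(p)\geq 0$. Plugging this into \eqref{eqK} gives
\[
0\leq -K_\tau(p)+\frac{K'_\tau}{\sigma_\tau(p)} .
\]
Proposition \ref{gaussian} gives $-K_\tau\leq 1+\tau^2$, and by definition $K'_\tau=-(1+\tau^2)$. Hence
\[
0\leq (1+\tau^2)\Bigl(1-\frac{1}{\sigma_\tau(p)}\Bigr),
\]
so $\sigma_\tau(p)\geq 1$. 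Because $p$ is a minimum point, $\sigma_\tau\geq 1$ everywhere on $\Sigma_\tau$.

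\textbf{Rigidity.} Suppose $\sigma_\tau\equiv 1$ on $\Sigma_\tau$. Then $\Delta_{g_\tau}\log\sigma_\tau\equiv 0$, and \eqref{eqK} forces $K_\tau\equiv K'_\tau=-(1+\tau^2)$ everywhere. The equality case of Proposition \ref{gaussian} then says $\Sigma_\tau$ is umbilical and $M$ is Fuchsian.

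An alternative route to rigidity, which would also give a strong version, is a Gauss--Bonnet comparison. We have
\[
\int_{\Sigma_\tau}K_\tau\,\vol_\tau=\int_{\Sigma_\tau}K'_\tau\,\vol_{h_\tau}=2\pi\chi(\Sigma).
\]
If $\sigma_\tau\equiv1$, the two area forms coincide, so $\int_{\Sigma_\tau}\bigl(K_\tau+1+\tau^2\bigr)\vol_\tau=0$. The integrand is nonnegative, so it vanishes identically, and we again conclude $K_\tau\equiv-(1+\tau^2)$.

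\textbf{Main obstacle.} The only delicate point is the sign convention: \eqref{eqK} must be written with the convention in which $\Delta_{g_\tau}$ is nonnegative at a minimum. The paper defines $\Delta=\Tr\Hess$, so this holds. Beyond that, the argument is routine because $\Sigma_\tau$ is compact and no boundary behaviour needs to be controlled.
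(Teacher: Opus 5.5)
Your proof is correct and is the argument the paper invokes by citation: the Ahlfors--Schwarz--Pick/Yau/Wolpert maximum principle, which evaluates \eqref{eqK} at a minimum of $\sigma_\tau$ using $K_\tau\geq K'_\tau$ from Proposition~\ref{gaussian}, and in the equality case reads off $K_\tau\equiv K'_\tau$ and applies the equality case of that proposition. One small caveat: your Gauss--Bonnet aside does not give the one-point (``strong'') version you allude to, because an integral identity cannot detect equality at a single point; that version instead follows from Hopf's strong maximum principle applied to $w=\log\sigma_\tau\geq 0$, which satisfies $\Delta_{g_\tau}w\leq 2(1+\tau^2)w$.
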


It can be shown that, if the equality is attained {\it{at one point}}, then $M$ is Fuchsian.

\begin{proof}[Proof of Lemma \ref{lm: AhlforsSchwarzPick}]
Recall that $K_\tau \geq K'_{\tau}$ by Proposition \ref{gaussian}.
the inequality thus follows from a classical maximum principle
(see, for instance, \cite[Theorem 2']{yau} or the simpler \cite{wolpert}),
which is an elaboration of the so-called \emph{Ahlfors--Schwarz--Pick lemma}
(see \cite[Theorem A]{ahlfors}).

\end{proof}


The function $\ell_\tau$, on the other hand, obeys a similar maximum principle.
\begin{lemma}
    The function $\ell_\tau$ satisfies
    \begin{equation} \label{eq: inequality normal}
\ell_\tau \leq \ell_\tau' = \frac{1}{1+\tau^2}~.
\end{equation}
Moreover, if the equality is everywhere attained, then $M$ is Fuchsian.

\end{lemma}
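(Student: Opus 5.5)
We apply the maximum principle directly to the lapse equation \eqref{eq:cmc} of Proposition~\ref{prop:lapse}. Since $\Sigma_\tau$ is closed and $\ell_\tau$ is smooth, $\ell_\tau$ attains its maximum at some point $p\in\Sigma_\tau$. At $p$ we have $\Delta_{g_\tau}\ell_\tau(p)\leq 0$, so \eqref{eq:cmc} gives
\[(2\tau^2+K_\tau(p)+2)\,\ell_\tau(p) = 1+\tfrac12\Delta_{g_\tau}\ell_\tau(p)\leq 1~.\]
By Proposition~\ref{gaussian}, $K_\tau\geq -(1+\tau^2)$, hence $2\tau^2+K_\tau+2\geq 1+\tau^2>0$ everywhere. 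Because $\ell_\tau>0$, we obtain
\[\max_{\Sigma_\tau}\ell_\tau=\ell_\tau(p)\leq \frac{1}{2\tau^2+K_\tau(p)+2}\leq \frac{1}{1+\tau^2}~,\]
which is \eqref{eq: inequality normal}.

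For the equality case, suppose $\ell_\tau\equiv \frac{1}{1+\tau^2}$ on $\Sigma_\tau$. Then $\ell_\tau$ is constant, so $\Delta_{g_\tau}\ell_\tau\equiv 0$, and \eqref{eq:cmc} becomes $(2\tau^2+K_\tau+2)\frac{1}{1+\tau^2}=1$. This gives $K_\tau\equiv -(1+\tau^2)$ identically on $\Sigma_\tau$. By the equality case of Proposition~\ref{gaussian}, $\Sigma_\tau$ is umbilical and $M$ is Fuchsian. It suffices that equality holds everywhere on a single leaf $\Sigma_\tau$, so the conclusion is in fact stronger than stated.

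No step here is a real obstacle. The only points requiring care are the sign conventions: the Laplacian $\Delta_{g_\tau}=\Tr_{g_\tau}\Hess$ is nonpositive at a maximum, and the coefficient $2\tau^2+K_\tau+2$ is positive, which follows from the curvature bound in Proposition~\ref{gaussian}. Both are guaranteed by the results already established above.
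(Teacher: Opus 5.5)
Your proof is correct and follows the paper's argument: you evaluate \eqref{eq:cmc} at a maximum point of $\ell_\tau$, use $\Delta_{g_\tau}\ell_\tau\le 0$ there, and bound the coefficient $2\tau^2+K_\tau+2$ from below via Proposition~\ref{gaussian}. For the equality case you substitute the constant $\ell_\tau$ back into \eqref{eq:cmc} to get $K_\tau\equiv-(1+\tau^2)$, which spells out what the paper compresses into a single appeal to Proposition~\ref{gaussian}.
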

\begin{proof}
    Recall that $\ell_\tau$ satisfies \Cref{eq:cmc}.
At a maximum point $\bar{p}\in\Sigma_\tau$ for $\ell_\tau$
we must have $(\Delta_{g_\tau}\ell_\tau)(\bar{p})\leq 0$ and so
\Cref{eq:cmc} implies that
\[
\ell_\tau(\bar{p})\leq \frac{1}{2\tau^2+K_\tau(\bar{p})+2}
\]
from which we obtain
\[
\ell_\tau\leq \frac{1}{2\tau^2+K_\tau(\bar{p})+2}\leq \frac{1}{1+\tau^2}\, .
\]
The last inequality follows from Proposition \ref{gaussian}, which also ensures that,
if equality is everywhere attained, then $M$ is Fuchsian.
\end{proof}

Inequalities \eqref{eq: AhlforsSchwarzPick} and \eqref{eq: inequality normal} are another occurrence of the ``competing phenomena'' mentioned in the introduction: Fuchsian manifolds tend to minimize the area of Cauchy hypersurfaces and maximize the timelike width. The key to the proof of the main theorem will be the following maximum principle:

\begin{prop}\label[proposition]{inequality}
On the Cauchy surface $\Sigma_\tau$ of constant mean curvature $\tau$ inside 
the MGHC anti-de Sitter manifold $M$
we have the uniform inequality
\[\sigma_\tau \ell_\tau \geq \ell_\tau' = \frac{1}{1+\tau^2}~.\]
Moreover, equality holds at every point if and only if $K_\tau \equiv K_\tau'$ and $\II_\tau \equiv \tau g_\tau$, in which case $M$ is Fuchsian.
\end{prop}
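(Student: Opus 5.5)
The plan is to apply the maximum principle to the function $u_\tau:=\log(\sigma_\tau\ell_\tau)$ on the closed surface $\Sigma_\tau$, combining the two elliptic equations already available: Proposition \ref{prop:lapse} for $\ell_\tau$ and \Cref{eqK} for $\sigma_\tau$. The key point is that the Gauss curvature $K_\tau$ appears in both equations with opposite signs, so it cancels in the equation for $u_\tau$.

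\textbf{Step 1: the equation for $u_\tau$.} Set $a:=1+\tau^2$, so that $K'_\tau=-a$. Dividing \Cref{eq:cmc} by $\ell_\tau$ and using $\Delta\log\ell=\Delta\ell/\ell-|\nabla\ell|^2/\ell^2$ gives
\[
\tfrac12\Delta_{g_\tau}\log\ell_\tau=(2\tau^2+K_\tau+2)-\frac{1}{\ell_\tau}-\frac{|\nabla\ell_\tau|^2}{2\ell_\tau^2}.
\]
Adding \Cref{eqK} eliminates $K_\tau$ and yields
\[
\tfrac12\Delta_{g_\tau}u_\tau=2a-\frac{a}{\sigma_\tau}-\frac{1}{\ell_\tau}-\frac{|\nabla\ell_\tau|^2}{2\ell_\tau^2}.
\]

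\textbf{Step 2: the inequality at a minimum.} Since $\Sigma_\tau$ is compact, $u_\tau$ attains its minimum at some point $\bar p$, where $\Delta_{g_\tau}u_\tau(\bar p)\ge0$. Dropping the nonpositive gradient term gives
\[
\frac{a}{\sigma_\tau}+\frac{1}{\ell_\tau}\le 2a \quad\text{at }\bar p.
\]
By AM--GM,
\[
2\sqrt{\frac{a}{\sigma_\tau\ell_\tau}}\le 2a,
\]
that is, $\sigma_\tau\ell_\tau(\bar p)\ge 1/a$. Because $\bar p$ is a minimum point, this bound holds on all of $\Sigma_\tau$, which proves the inequality.

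\textbf{Step 3: the equality case.} Suppose $\sigma_\tau\ell_\tau\equiv1/a$. Then $u_\tau$ is constant, so every point is a minimum and every inequality in Step 2 is an equality everywhere. This forces:
\begin{itemize}
\item $\nabla\ell_\tau\equiv0$, so $\ell_\tau$ is constant;
\item equality in AM--GM, so $a/\sigma_\tau=1/\ell_\tau$, and together with $a/\sigma_\tau+1/\ell_\tau=2a$ this gives $\sigma_\tau\equiv1$ and $\ell_\tau\equiv1/a$.
\end{itemize}
Substituting the constant $\ell_\tau=1/a$ into \Cref{eq:cmc} gives $2\tau^2+K_\tau+2=a$, i.e.\ $K_\tau\equiv-(1+\tau^2)=K'_\tau$. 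By Proposition \ref{gaussian}, $\Sigma_\tau$ is then umbilical with $B_\tau=\tau\,\Id$, so $\II_\tau\equiv\tau g_\tau$, and $M$ is Fuchsian.

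Conversely, if $K_\tau\equiv K'_\tau$ and $\II_\tau\equiv\tau g_\tau$, then $M$ is Fuchsian and $\Sigma_\tau$ is the umbilical CMC leaf. The Fuchsian lemma gives $\ell_\tau=1/a$, and uniqueness in the uniformisation theorem gives $\sigma_\tau\equiv1$, so equality holds everywhere.

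The main obstacle is choosing the right auxiliary function. Taking $\log(\sigma_\tau\ell_\tau)$ rather than $\sigma_\tau\ell_\tau$ is what lets $K_\tau$ cancel exactly and leaves a gradient term with a favourable sign. After that, the only nontrivial step is the AM--GM observation that turns the sum bound $a/\sigma_\tau+1/\ell_\tau\le2a$ into the product bound $\sigma_\tau\ell_\tau\ge1/a$.
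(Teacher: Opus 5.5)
Your proposal is correct and follows the paper's proof essentially step by step: the same auxiliary function $\log(\sigma_\tau\ell_\tau)$ in which $K_\tau$ cancels, the same minimum-point estimate followed by AM--GM, and the same equality analysis forcing $d\ell_\tau\equiv 0$ and $\sigma_\tau\equiv 1$. The differences are minor: you normalize by $a=1+\tau^2$, and you read off $K_\tau\equiv K'_\tau$ from the lapse equation rather than the conformal-change equation. You also spell out the converse direction, which the paper leaves implicit in its Fuchsian lemma.
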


\begin{rem}
    Sharpening the maximum principle, one could actually prove that equality \emph{at one point} is enough to conclude that $K_\tau \equiv K_\tau'$.
\end{rem}

\begin{proof}[Proof of Proposition \ref{inequality}]
Combining \Cref{eqK} and \Cref{eq:cmc} we obtain
\[
   \frac{1}{2}
   \Delta_{g_\tau}\log(\sigma_\tau \ell_\tau)=
- K_\tau +\frac{1}{\sigma_\tau} K_{\tau}'+
(2\tau^2+K_\tau+2)-\frac{1}{\ell_\tau}-\frac{\|d\ell_\tau\|^2_{g_\tau}}{2 \ell_\tau^2}\,.
\]
Suppose that $\sigma_\tau \ell_\tau:\Sigma_\tau\rar\RR_{>0}$ achieves its minimum at the point $\bar{p}\in\Sigma_\tau$, so that $\Delta_{g_\tau}\log(\sigma_\tau \ell_\tau)(\bar{p})\geq 0$.
Then
\[
\frac{\vert K_{\tau}'\vert}{\sigma_\tau(\bar{p})}+\frac{1}{\ell_\tau(\bar{p})}\leq
2\tau^2 +2 -\frac{\|d\ell_\tau(\bar{p})\|^2_{g_\tau}}{2\ell_\tau^2(\bar{p})}\leq 2(\tau^2+1)=2 \vert K_\tau'\vert\, ,
\]
and, multiplying by $\ell'_\tau=1/|K'_\tau|$, we obtain
\begin{equation}\label{eq:2}
\frac{1}{\sigma_\tau(\bar{p})}+
\frac{\ell'_\tau}{\ell_\tau(\bar{p})} \leq 
2-\frac{\|d\ell_\tau(\bar{p})\|^2_{g_\tau}}{2\ell^2_\tau(\bar{p})}\ell'_\tau
\end{equation}
which is bounded above by $2$.
The inequality between arithmetic and geometric mean
then gives
\begin{equation*}
\sqrt{\dis
\frac{\ell'_\tau}{\sigma_\tau(\bar{p})\ell_\tau(\bar{p})}
}
\leq
\frac{1}{2}\left(
\frac{1}{\sigma_\tau(\bar{p})}+
\frac{\ell'_\tau}{\ell_\tau(\bar{p})}
\right)\leq 1~,
\end{equation*}
that is,
\[
\sigma_\tau(\bar{p}) \ell_\tau(\bar{p})\geq \ell'_\tau~.
\]
Since $\sigma_\tau\ell_\tau$ achieves its minimum at $\bar{p}\in\Sigma_\tau$, we obtain the wished inequality
\[
\sigma_\tau(p) \ell_\tau(p) \geq \ell'_\tau
\qquad\text{for all $p\in\Sigma_\tau$.}
\]

As for the last claim, assume now that the equality $\sigma_\tau\ell_\tau=\ell'_\tau$
holds at every point of $\Sigma_\tau$. Then
\eqref{eq:2} can be rewritten as
\begin{equation}\label{eq:arithmetic-geometric-mean}
1=\sqrt{\dis
\frac{\ell'_\tau}{\sigma_\tau\ell_\tau}
}\leq\frac{1}{2}\left(
\frac{1}{\sigma_\tau}+
\frac{\ell'_\tau}{\ell_\tau}
\right)\leq
1-\frac{\|d\ell_\tau\|^2_{g_\tau}}{4\ell_\tau^2}\ell'_\tau~.
\end{equation}
and we deduce that $d\ell_\tau\equiv 0$. Thus $\ell_\tau$ is constant and 
\[\sqrt{\dis
\frac{\ell'_\tau}{\sigma_\tau\ell_\tau}
} = \frac{1}{2}\left(
\frac{1}{\sigma_\tau}+
\frac{\ell'_\tau}{\ell_\tau}
\right)~,\]
which implies that $\sigma_\tau\equiv \ell_\tau/\ell'_\tau$ is constant too.
Together with our assumption
$\sigma_\tau \ell_\tau \equiv \ell_\tau'$,
we conclude that $\sigma_\tau\equiv 1$ everywhere.
As a consequence, $K_\tau\equiv K'_\tau$ and so $\Sigma_\tau$ is umbilical
and $M$ is Fuchsian by 
Lemma \ref{lm: AhlforsSchwarzPick} and
Proposition \ref{gaussian}.
\end{proof}

We can now conclude the proof of our main result.

\begin{proof}[Proof of Theorem \ref{thm: Main}]
Identify $\Sigma_0\times\RR$ and $M$ through
the global trivialization 
$\Phi: \Sigma_0 \times \RR \to M$ given by the CMC foliation,
introduced in \Cref{ss: CMC foliation}.

Through this identification, the anti-de Sitter metric of $M$ takes the form
\[g = g_\tau \oplus (-\ell_\tau^2 \ \d \tau^2)~.\]
Consider also the metric 
\[g'= h_\tau \oplus (-(\ell_\tau')^2\, \d \tau^2)~.\]
Since $g_\tau=\sigma_\tau h_\tau$, the associated volume forms satisfy
\[\vol_g = \frac{\sigma_\tau \ell_\tau}{\ell_\tau'} \vol_{g'}~.\]
Proposition \ref{inequality} ensures that $\sigma_\tau\ell_\tau\geq \ell'_\tau=\frac{1}{1+\tau^2}$, and so
\[\int_M \vol_g \geq \int_M \vol_{g'} = \int_\RR\left(\int_{\Sigma_\tau} \vol_{h_\tau} \right) \ell_\tau' \d \tau~.\]
Since $h_\tau$ has constant Gauss curvature $-(1+\tau^2)$, we get
\[\int_{\Sigma_\tau} \vol_{h_\tau} = \frac{2\pi \vert \chi(\Sigma_0)\vert}{1+\tau^2}~.\]
We thus conclude that
\[\int_M \vol_g \geq 2\pi \vert \chi(\Sigma_0)\vert  \int_\RR \frac{1}{(1+\tau^2)^2} \d \tau = \pi^2 \vert \chi(\Sigma_0)\vert~.\]

Equality holds only when $\sigma_\tau \ell_\tau \equiv \ell_\tau'$ everywhere on $M$. 
By Proposition \ref{inequality}, this implies that the maximal surface $\Sigma_0$ is umbilical, hence totally geodesic, and $M$ is thus Fuchsian.
\end{proof}

\bibliographystyle{alpha}

\end{document}